\shorttitle{Multi-locus asymptotic sampling distributions} % insert short title here for use in running head
\newcommand{\bo}[1]{\ensuremath \boldsymbol{#1}}
\newcommand{\eref}[1]{(\ref{#1})}
\newcommand{\sref}[1]{Section~\ref{#1}}
\newcommand{\fref}[1]{Figure~\ref{#1}}
\newcommand{\cref}[1]{Chapter~\ref{#1}}
\newcommand{\lemmaref}[1]{Lemma~\ref{#1}}
\newcommand{\propref}[1]{Proposition~\ref{#1}}
\newcommand{\thmref}[1]{Theorem~\ref{#1}}
\def\bbR{\mathbb{R}}
\def\bfmath#1{\boldsymbol{#1}}
\def\bfn{{\bfmath{n}}}
\def\bfu{{\bfmath{u}}}
\def\cH{{\mathcal H}}
\def\tA{\theta_{\mbox{\scriptsize$A$}}}
\def\tB{\theta_{\mbox{\scriptsize $B$}}}
\def\P#1{P^{(#1)}}
\def\bfP#1{\bfmath{P}^{(#1)}}
\def\n{\bo{n}}
\def\m{\bo{m}}
\def\e{\bo{e}}
\def\ds{\ensuremath{\displaystyle}}
\def\un{*}
\def\hXcond{\substack{X: |X| \geq 2, \\ \sLoc{h}\subseteq X \subseteq [\nLoc]}}
\def\hpcond{h' \in \hapSet{h}{X}}
\def\hppcond{h'' \in \hapSet{h}{X}}
\def\rsum#1{r(#1)}
\def\sLoc#1{S(#1)} %specified loci
\def\nLoc{L}    % total number of loci
\def\bpts#1{B(#1)}
\def\marginal#1#2{#1^{(#2)}} %marginal{n_allele}{loc}
\def\margVec#1{\sigma(#1)}
\DeclareMathOperator*{\compatible}{\varcurlywedge}
\def\st{:\;} %such that
\def\sub#1#2#3{M^{#3}_{#2}(#1)} %\sub{hap}{loc}{allele}
\def\rec#1#2#3{R^{#3}_{#2}(#1)} %\sub{hap}{loc}{allele}
\def\onelocq{p}
\def\hapSet#1#2{H(#1,#2)}       %\hapSet(h,X)
\def\uvec#1#2{\bfu^{#1}_{#2}}         
\def\any{\cdot}
\renewcommand{\subset}{\subseteq}
\def\qed{$\Box$}
\begin{document}
%%%%%%%%%%%%%%%%%%%%%%%%%%%%%%%%%%%%%%%%%%%%%%%%%%%%%%%%%%%%%%%%%%%%%%%%%%%%%%%
\title{Closed-form asymptotic sampling distributions\\ under
the coalescent with recombination for\\ an arbitrary
number of loci\\ }

\authorone[University of California, Berkeley]{Anand Bhaskar}
\authortwo[University of California, Berkeley]{Yun S. Song}

\addressone{Computer Science Division, University of California, Berkeley, CA 94720, USA.}
\addresstwo{Computer Science Division and Department of Statistics,  University of California, Berkeley, CA 94720, USA.}
\emailtwo{yss@stat.berkeley.edu}

\begin{abstract}
Obtaining a closed-form sampling distribution for the coalescent with recombination is a challenging problem.  In the case of \emph{two} loci, a new framework based on asymptotic series has recently  been developed to derive closed-form results when the recombination rate is moderate to large.  In this paper, an \emph{arbitrary} number of loci is considered and combinatorial approaches are employed to find closed-form expressions for the first couple of terms in an asymptotic expansion of the multi-locus sampling distribution.  These expressions are universal in the sense that their functional form in terms of the  marginal one-locus distributions applies to all finite- and infinite-alleles models of mutation.
\end{abstract}

\keywords{coalescent theory; recombination; asymptotic expansion; sampling distribution}

\ams{92D15}{65C50, 92D10}
%%%%%%%%%%%%%%%%%%%%%%%%%%%%%%%%%%%%%%%%%%%%%%%%%%%%%%%%%%%%%%%%%%%%%%%%%%%%%%%
\section{Introduction}
%%%%%%%%%%%%%%%%%%%%%%%%%%%%%%%%%%%%%%%%%%%%%%%%%%%%%%%%%%%%%%%%%%%%%%%%%%%%%%%
Coalescent processes, first introduced by Kingman \cite{kin:1982:JAP,kin:1982:SPA} about three decades ago, are widely-used stochastic models in population genetics that describe the genealogical ancestry of a sample of chromosomes randomly drawn from a population.  For many applications, the key quantity of interest  is the probability of observing the sample under a given coalescent model of evolution.  In the one-locus case with special models of mutation such as the infinite-alleles model or the finite-alleles parent-independent mutation  model, exact sampling distributions have been known in closed-form for many years \cite{wri:1949,ewe:1972}.  In contrast, for models with two or more loci with finite recombination rates, finding an exact, closed-form sampling distribution has remained a challenging open problem.  Therefore, most previous approaches have focused on Monte Carlo methods, including importance sampling \cite{gri:mar:1996,ste:don:2000, fea:don:2001, gri:etal:2008} and Markov chain Monte Carlo \cite{kuh:etal:2000, nie:2000, wan:ran:2008}.  Such methods have led to useful tools for population genetics analysis, but they are in general computationally intensive and their accuracy is difficult to characterize theoretically.

Recently, Jenkins and one of us \cite{jen:son:2009:G, jen:son:2010, jen:son:2011} made progress on the long-standing problem of finding sampling formulas for population genetic models with recombination
by proposing a new approach based on asymptotic expansion.  That work can be summarized as follows.  Consider an exchangeable random mating model with two loci, denoted $A$ and $B$. In the standard coalescent or diffusion limit, let $\tA$ and $\tB$  denote the respective population-scaled mutation rates at loci $A$ and $B$, and let $\rho$ denote the population-scaled recombination rate  between the two loci.  Given a sample configuration $\bfn$ (defined later in the text), assume that $\rho$ is large and consider an asymptotic expansion of the sampling probability $q(\bfn\mid\tA,\tB,\rho)$ in inverse powers of $\rho$:
%%%%%%%%
\begin{equation}
q(\bfn\mid\tA,\tB,\rho) = q_0(\bfn\mid\tA,\tB) + \frac{q_1(\bfn\mid\tA,\tB)}{\rho} +
\frac{q_2(\bfn\mid\tA,\tB)}{\rho^2} + \cdots,
\label{eq:intro,expansion}
\end{equation}
%%%%%%%%
where the coefficients $q_0(\bfn\mid\tA,\tB), q_1(\bfn\mid\tA,\tB), q_2(\bfn\mid\tA,\tB),\ldots,$ are independent of $\rho$.  The zeroth-order term $q_0$ corresponds to the sampling probability in the $\rho=\infty$ case (i.e., when the loci evolve independently), given simply by a product of marginal one-locus sampling probabilities \cite{eth:1979:JAP}.  
For either the infinite-alleles or an arbitrary finite-alleles model of mutation at each locus, 
Jenkins and Song \cite{jen:son:2009:G, jen:son:2010} derived a closed-form formula for the first-order term $q_1$ and showed that its functional form depends on the assumed model of mutation only through marginal one-locus sampling probabilities, a property which they termed \emph{universality}.
Further, they showed that the second-order term $q_2$ can be expressed as a sum of a closed-form formula plus another part that can be easily evaluated numerically using dynamic programming; they also showed that, for most sample configurations, the closed-form part of $q_2$ dominates the part that needs to be computed numerically.  More recently, the same authors \cite{jen:son:2011} utilized the diffusion process dual to the coalescent with recombination to develop a new computational technique for computing $q_k$ for all $k \geq 1$.  Moreover, they proved that only a finite number of terms in the asymptotic expansion is needed to recover (via the method of Pad\'e approximants) the \emph{exact} two-locus sampling probability as an analytic function of $\rho$ for all $\rho\in[0,\infty)$.  An immediate application of this work would be the composite-likelihood method \cite{hud:2001:G,mcv:etal:2002,mcv:etal:2004} for estimating fine-scale recombination rates which is based on combining two-locus sampling probabilities.

The main goal of this paper is to extend some of the mathematical results described above to more than two loci.  More precisely, we derive closed-form formulas for the first two terms ($q_0$ and $q_1$) in an asymptotic expansion (described later in detail) of the sampling distribution for an \emph{arbitrary} number of loci.  In general, the number of possible allelic combinations grows exponentially with the number of loci, and the system of equations that we need to solve 
is considerably more complex than that in the case of two loci.  
Note that the details of the computational techniques developed in \cite{jen:son:2009:G, jen:son:2010} are specific to the case of two loci, and new methods need to be developed to handle an arbitrary number of loci.
In this paper, we employ combinatorial approaches to make progress on the general case.
Our work shows that the universality property of $q_0$ and $q_1$ previously observed \cite{jen:son:2009:G, jen:son:2010} in the  two-locus case also applies to the case of an arbitrary number of loci.

The remainder of this paper is organized as follows.  In \sref{sec:prelim}, we introduce the multi-locus model to be considered in this paper and describe our notational convention.  
Our main results are summarized in \sref{sec:results} and an explicit example involving three loci is discussed in \sref{sec:example}.  
In \sref{sec:proofs}, we provide proofs of the main theoretical results presented in this paper.

%%%%%%%%%%%%%%%%%%%%%%%%%%%%%%%%%%%%%%%%%%%%%%%%%%%%%%%%%%%%%%%%%%%%%%%%%%%%%%%
\section{Preliminaries}
\label{sec:prelim}
%%%%%%%%%%%%%%%%%%%%%%%%%%%%%%%%%%%%%%%%%%%%%%%%%%%%%%%%%%%%%%%%%%%%%%%%%%%%%%%
Below we describe the model considered in this paper and lay out notation.  

%%%%%%%%%%%%%%%%%%%%%%%%%%%%%%%%%%%%%%%%%%%%%%%%%%%%%%%%%%%
\subsection{Model}
%%%%%%%%%%%%%%%%%%%%%%%%%%%%%%%%%%%%%%%%%%%%%%%%%%%%%%%%%%%

\begin{figure}
\centerline{\includegraphics{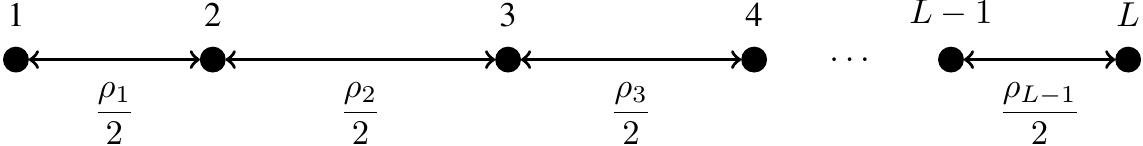}}
\caption{Illustration of $\nLoc$ loci arranged linearly. The population-scaled recombination rate between loci $l$ and $l+1$ is $\rho_l/2$.}
\label{fig:loci}
\end{figure}

We consider the diffusion limit of a neutral haploid exchangeable model of random mating with constant population size $2N$.  The haploid individuals in the population are referred to as gametes, and each gamete contains $\nLoc \geq 2$ loci labeled $1, 2, \dots, \nLoc$ and laid out linearly as illustrated in \fref{fig:loci}.  The probability of mutation at locus $l$ per gamete per generation is denoted by $u_l$, whereas the probability of recombination between loci $l$ and $l+1$ per gamete per generation is denoted by  $c_l$.  In the diffusion limit, as $N\to\infty$ we let $u_l\to 0$, for $1\leq l \leq \nLoc$, and $c_l\to 0$, for $1\leq l \leq \nLoc-1$, such that $4Nu_l\to\theta_l$ and $4Nc_l\to\rho_l$, where
$\theta_l$ and $\rho_l$ are population-scaled mutation and recombination rates, respectively.

Given a positive integer $k$, we use $[k]$ to denote the $k$-set $\{1,\ldots, k\}$.
At locus $l$, we assume that there are $K_l$ distinct possible allele types, labeled by $[K_l]$.  Mutation events at locus $l$ occur according to a Poisson process with rate $\theta_l/2$, and allelic changes are described by an ergodic Markov chain with transition matrix $\bfP{l} = (\P{l}_{ab})$; i.e., when a mutation occurs to an allele $a\in [K_l]$, it mutates to allele $b\in[K_l]$ with probability $\P{l}_{ab}$. The stationary distribution of $\bfP{l}$ is given by $\bfmath{\pi}^{(l)}$ with the $a$th entry $\pi^{(l)}_a$.

Recombination events between loci $l$ and $l+1$ occur at rate $\rho_l/2$.
In our work, we are interested in the case where $\rho_l\gg 1$, for all $l\in[\nLoc-1]$, and have similar order of magnitude.
Specifically, we re-express the recombination rates as $\rho_l = r_l \rho$, where $r_l$ are scaling constants, and consider an asymptotic expansion as $\rho \to \infty$.

%%%%%%%%%%%%%%%%%%%%%%%%%%%%%%%%%%%%%%%%%%%%%%%%%%%%%%%%%%%
\subsection{Notation}
%%%%%%%%%%%%%%%%%%%%%%%%%%%%%%%%%%%%%%%%%%%%%%%%%%%%%%%%%%%
As detailed later, the standard coalescent with recombination implies a closed system of recursion relations satisfied by sampling probabilities.  To obtain such a closed system of recursions, the allelic type space must be extended to allow gametes to be \emph{unspecified} at some loci.  We use the symbol $\un$ to denote an unspecified allele and define the $\nLoc$-locus \emph{haplotype} set $\cH$ as
\[
\cH = \bigg( ([K_1]\cup \{\un\}) \times \cdots \times ([K_\nLoc]\cup \{\un\}) \bigg)\setminus \{\un^\nLoc\}.
\]
Given a haplotype $h\in\cH$, we use $h_l\in[K_l]\cup\{\un\}$ to denote the allelic state of $h$ at locus $l$.
In what follows, we introduce definitions that are used throughout the paper.

The following two definitions explain how we denote samples:

\begin{definition}[$\n$ and $\e_h$, sample configurations]
A sample configuration is denoted by $\bfn=(n_h)_{h\in\cH}$, where $n_h$ is the number of times haplotype $h$ occurs in the sample, and 
the same letter $n$ in non-boldface is used to denote the total sample size of $\n$; i.e., $n=\sum_{h\in\cH} n_h$.
The symbol $\e_h$ is used to denote a sample configuration of size 1 for which $n_h=1$ and $n_{h'}=0$ for all $h'\neq h$.
Note that we can write $\n=\sum_{h\in\cH} n_h \e_h$. 
\end{definition}

\begin{definition}[$\marginal{\n}{l}$ and $\margVec{\n}$, marginal sample configurations]
Let $\bfn=(n_h)_{h\in\cH}$ be an $\nLoc$-locus sample configuration.
For $1\leq l\leq \nLoc$, the marginal sample size for locus $l$ and allele $a\in[K_l]$ is defined as $\marginal{n_a}{l} = \sum_{h\in\cH: h_l=a} n_h$, and the marginal sample size for locus $l$ is defined as $\marginal{n}{l} = \sum_{a \in [K_l]} \marginal{n_a}{l}$ (i.e., the total number of haplotypes with specified alleles at locus $l$).
Further, we use  $\marginal{\bfn}{l}=(\marginal{n_a}{l})_{a\in[K_l]}$ to denote the $K_l$-dimensional vector specifying the marginal sample configuration for locus $l$, and use
$\margVec{\n}=(\marginal{\n}{1},\ldots,\marginal{\n}{\nLoc})$ to denote the $\nLoc$-tuple of marginal sample configurations. 
Also note that if $h_l=\un$, then $\marginal{\e_h}{l}$ is a $K_l$-dimensional zero vector.
\end{definition}

The sets described in the following two definitions specify where mutations or recombinations can occur in a given haplotype:

\begin{definition}[$\sLoc{h}$, specified loci] 
For each locus $l$, the alleles labeled by $[K_l]$ are called \emph{specified} alleles.  Further, given a haplotype $h\in \cH$, we use $\sLoc{h} \subset [\nLoc]$ to denote the set of loci at which $h$ has specified alleles (i.e., not $\un$).
\end{definition}
    
\begin{definition}[$\bpts{h}$, break intervals] 
When considering recombination, a given haplotype $h\in\cH$ can be broken up between loci $l$ and $l+1$ if $\min(\sLoc{h}) \leq l < l+1 \leq \max(\sLoc{h})$.  
The index $l$ is used to refer to the break interval $(l,l+1)$, and the set of valid break intervals for haplotype $h$ is denoted by  $\bpts{h}=\{\min(\sLoc{h}),\ldots,\max(\sLoc{h})-1\}$.
\end{definition}

The two relations described below compare haplotypes.  When two haplotypes satisfy either relation, then their corresponding  lineages are allowed to coalesce.

\begin{definition}[$\compatible$, compatibility]
Given  a pair of haplotypes $h,h'\in\cH$,  if $h_l=h'_l$ for all $l\in \sLoc{h}\cap \sLoc{h'}$, then
we say that they are \emph{compatible} and write $h \compatible h'$.
\end{definition}

\begin{definition}[$\succeq$, containment] 
Given a pair of haplotypes $h,h'\in\cH$, we write $h\succeq h'$ if $\sLoc{h}\supseteq\sLoc{h'}$ and $h_l=h'_l$ for all $l\in \sLoc{h'}$.
\end{definition}

Corresponding to the types of event that may occur in the coalescent with recombination, we define the following operations on haplotypes:
\begin{enumerate}
\item (Mutate): Given a locus $l\in[\nLoc]$ and an allele $a \in [K_l]$, define $\sub{h}{l}{a}$ as the haplotype derived from $h\in\cH$ by substituting the allele at locus $l$ with $a$.
\item (Coalesce): If $h \compatible  h'$, define $C(h,h')$ as the haplotype $h''$ constructed as follows: 
\[
h''_l =
\left\{
\begin{array}{ll}
	 h_l,  & \text{if $h_l \neq \un$ and $h_l' = \un$},\\
	 h'_l, & \text{if $h_l = \un$ and $h'_l\neq \un$},\\
	 h_l=h'_l, & \text{otherwise}.
\end{array}
\right.
\]
\item (Break):
Given a break interval $l \in B(h)$, we use  $\rec{h}{l}{-} = (h_1,\ldots,h_l,\un,\ldots,\un)$ to denote the haplotype obtained from $h$ by replacing $h_j$ with $\un$ for all $j\geq l+1$, and $\rec{h}{l}{+} = (\un,\ldots,\un,h_{l+1},\ldots,h_\nLoc)$ to denote the haplotype obtained from $h$ by replacing $h_j$ with $\un$ for all $j\leq l$.
\end{enumerate}
Given a haplotype $h\in\cH$ and a set $X\subseteq [\nLoc]$, we define $\hapSet{h}{X}$ as the set of haplotypes that contain $h$ and are specified at the loci in $X$; i.e.,
\begin{equation}
\hapSet{h}{X} = \{ h'\in \cH \mid \sLoc{h'}\supseteq X \text{ and } h'\succeq h\}.
\label{eq:H(h,X)}
\end{equation}
Lastly, for a given subset $X\subset[\nLoc]$, we define $\rsum{X}$ as
\[
  \rsum{X}=r_{\min(X)} + r_{\min(X)+1} +\dots + r_{\max(X) - 1},
\]    
which corresponds to the total recombination rate (relative to $\rho$) between the first and the last loci in $X$.

%%%%%%%%%%%%%%%%%%%%%%%%%%%%%%%%%%%%%%%%%%%%%%%%%%%%%%%%%%%%%%%%%%%%%%%%%%%%%%%
\section{Main results on multi-locus asymptotic sampling distributions}
\label{sec:results}
%%%%%%%%%%%%%%%%%%%%%%%%%%%%%%%%%%%%%%%%%%%%%%%%%%%%%%%%%%%%%%%%%%%%%%%%%%%%%%%
For ease of notation, in most cases we suppress the dependence on the parameters $\{ r_l \}_{l=1}^{\nLoc-1}$ and $\{ \theta_l, \bfP{l} \}_{l=1}^\nLoc$
when writing sampling probabilities. By exchangeability, the probability of any \emph{ordered} configuration corresponding to sample $\bfn$ is invariant under all permutations of the sampling order.  Hence, we use $q(\bfn)$ without ambiguity to denote the stationary sampling probability of any \emph{particular} ordered configuration consistent with $\bfn$.
From the standard coalescent with recombination \cite{gri:1981, gol:1984, hud:1985, eth:gri:1990, gri:mar:1996}, one can derive a closed system of recursions and boundary conditions for which $q(\n)$ is the unique solution. Specifically, $q(\n)$ satisfies the following system of linear equations:
\begin{align}
\lefteqn{\sum_{h\in\cH} n_h \biggl[ (n - 1) + \sum_{l\in \sLoc{h}} \theta_l + \sum_{l\in \bpts{h}} \rho_l \biggr] q(\n)}\hspace{1.0cm}  & \notag \\
=&\sum_{h\in\cH} n_h\biggl[ (n_h - 1) q(\n - \e_h) + \sum_{h'\in\cH\st h\compatible h', h\neq h'}  n_{h'} q(\n - \e_{h} - \e_{h'} + \e_{C(h,h')}) \notag\\
{}& \phantom{\sum_{h\in\cH} n_h\biggl[}
+ \sum_{l\in\sLoc{h}} \theta_l \sum_{a \in [K_l]} P^{(l)}_{a,h_l} q(\n - \e_h + \e_{\sub{h}{l}{a}})\notag\\
{}& \phantom{\sum_{h\in\cH} n_h\biggl[}
+ \sum_{l\in \bpts{h}} \rho_{l} q(\n - \e_h + \e_{\rec{h}{l}{-}} + \e_{\rec{h}{l}{+}})
\biggr], \label{eq:linearsystem}
\end{align}
with boundary conditions 
\begin{align}\label{eq:boundaryconditions}
q(\e_h) = \prod_{l\in \sLoc{h}} \pi^{(l)}_{h_l}, \text{ for all $h\in\cH$.}
\end{align}
We define $q(\n) = 0$ if $n_h < 0$ for any $h \in \cH$.

The above closed system of equations is a full-rank linear system in the variables $q(\m)$ for all samples $\m$ reachable from the given sample $\n$ through repeated application of \eref{eq:linearsystem}.  Since $\rho_l=r_l \rho$, the entries of the matrix associated with the linear system are linear in $\rho$.  Hence, the entries in the inverse matrix are rational functions of $\rho$, thus implying that $q(\n)$ is a rational function of $\rho$, say $f(\rho)/g(\rho)$
where $f$ and $g$ are polynomials that depend on $\n$ and $r_l$.
Also, for every sample configuration $\n$, since $0 < q(\n) < 1$ as $\rho\to\infty$, $f$ and $g$ must be of the same degree in $\rho$.  Hence,
it follows that $q(\n)$ is also a rational function of $\rho^{-1}$ with both the numerator and the denominator having non-zero constant terms.
Hence, the Taylor series of $q(\n)$ about $\rho = \infty$ gives the following asymptotic expansion in inverse powers of $\rho$:
\begin{align} \label{eq:asf}
q(\n) = q_0(\n) + {q_1(\n) \over \rho} + {q_2(\n) \over \rho^2} + O\left({1 \over \rho^3}\right),
\end{align}
where the coefficients $q_0(\n),q_1(\bfn),q_2(\bfn),\ldots,$ are uniquely determined, and they depend on the sample configuration $\n$ and the model parameters $\{ \theta_l, \bfP{l} \}_{l=1}^\nLoc$ and $\{ r_l \}_{l=1}^{\nLoc-1}$, but not on $\rho$.
Note that $q_0(\n)$ corresponds to the sampling probability when $\rho$ is infinitely large,
in which case all haplotypes instantly break up into one-locus fragments and evolve independently back in time.   Hence, as proved by Ethier\cite{eth:1979:JAP} in the case of two loci, we expect $q_0(\n)$ to be given by the product of marginal one-locus  sampling probabilities.  The following result formalizes this intuition:
%%%%%%%%%%%%%%%%%%%%%%%%%%%%%%%%%%%%%%%%%%%%%%%%%%%%%%%%%%%%%%%%%%%%%%%%%%%%%%%%
% Theorem 1 and proof
%%%%%%%%%%%%%%%%%%%%%%%%%%%%%%%%%%%%%%%%%%%%%%%%%%%%%%%%%%%%%%%%%%%%%%%%%%%%%%%%

\begin{proposition}\label{prop:q0} 
For all $\nLoc$-locus sample configurations $\bfn$, 
\begin{equation}
 q_0(\n) = \prod_{l=1}^{\nLoc} \onelocq(\marginal{\n}{l} \mid \theta_l, \bfP{l}),
\label{eq:q0(n)}
\end{equation}
where $\onelocq$ denotes the  marginal one-locus sampling distribution.
\end{proposition}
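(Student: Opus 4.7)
The plan is to substitute the asymptotic expansion \eqref{eq:asf} into the recursion \eqref{eq:linearsystem}, extract the leading power of $\rho$ to obtain a ``break-invariance'' constraint on $q_0$, and then reduce the first subleading equation to a product of standard one-locus coalescent recursions. Writing $\rho_l = r_l \rho$ and substituting $q(\m) = q_0(\m) + q_1(\m)/\rho + O(\rho^{-2})$ into every term of \eqref{eq:linearsystem}, I first match the coefficient of $\rho^1$ on the two sides. Only the explicit recombination contributions carry a factor of $\rho$, so for every configuration $\n$ the match reads
\begin{equation*}
\sum_{h \in \cH} n_h \sum_{l \in \bpts{h}} r_l \bigl[q_0(\n) - q_0(\n - \e_h + \e_{\rec{h}{l}{-}} + \e_{\rec{h}{l}{+}})\bigr] = 0.
\end{equation*}
The crucial observation is that each ``break move'' $\n \mapsto \n - \e_h + \e_{\rec{h}{l}{-}} + \e_{\rec{h}{l}{+}}$ preserves every marginal configuration $\marginal{\n}{l'}$, because replacing one copy of $h$ by the pair $\rec{h}{l}{-}, \rec{h}{l}{+}$ merely redistributes the specified alleles of $h$ across two haplotypes without altering any per-locus allele count. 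Consequently the candidate $\tilde q_0(\n) := \prod_{l=1}^{\nLoc} \onelocq(\marginal{\n}{l} \mid \theta_l, \bfP{l})$ satisfies the above identity term-by-term, and it also matches the boundary conditions \eqref{eq:boundaryconditions} on every singleton sample.

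To complete the identification $q_0 \equiv \tilde q_0$, I would next extract the coefficient of $\rho^0$ in \eqref{eq:linearsystem} and restrict attention to ``fully-split'' configurations $\n^\star$ in which every haplotype with $n_h > 0$ is specified at exactly one locus. For such $\n^\star$ the sets $\bpts{h}$ are empty, every $q_1$ contribution drops out, and the recurrence becomes a closed equation in $q_0$ alone. The same-locus coalescences yield the usual $n_h(n_h - 1) q_0(\n^\star - \e_h)$ terms, while each different-locus coalescence produces a two-locus haplotype $C(h, h')$; the break-invariance identity from the previous paragraph collapses $q_0(\n^\star - \e_h - \e_{h'} + \e_{C(h,h')})$ back to $q_0(\n^\star)$, and a direct counting check shows that these contributions cancel exactly against the ``$h \neq h'$ at different loci'' part of the factor $n - 1$ on the left-hand side. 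What remains is a sum over loci of independent one-locus sampling recursions for the marginals $\marginal{\n^\star}{l}$, which $\prod_l \onelocq(\marginal{\n^\star}{l})$ satisfies factor-by-factor by definition. Since the recursion \eqref{eq:linearsystem} together with \eqref{eq:boundaryconditions} uniquely determines $q$ and hence its expansion coefficient $q_0$, I conclude $q_0 = \tilde q_0$ on fully-split $\n^\star$, and the break invariance then lifts the equality to arbitrary $\n$.

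The main obstacle is the counting in the separation step: the different-locus coalescence term creates a multi-locus haplotype $C(h, h')$, and one must carefully track which break-invariance reductions cancel which pieces of the left-hand side before the per-locus one-locus structure becomes apparent. A more streamlined alternative, in the spirit of Ethier's \cite{eth:1979:JAP} two-locus argument, is purely probabilistic: as $\rho \to \infty$ every recombination event in the ancestral graph occurs on an infinitely faster timescale than coalescence or mutation, so each sampled haplotype is instantaneously shattered into one-locus fragments whose subsequent genealogies at distinct loci are independent, yielding the product formula \eqref{eq:q0(n)} directly.
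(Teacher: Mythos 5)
Your argument is correct and follows essentially the same route as the paper: extract the $O(\rho)$ coefficient of \eref{eq:linearsystem} to obtain the break-invariance relation, reduce to fully-split configurations where the order-one equation decouples into $\nLoc$ independent one-locus recursions (after the cross-locus coalescence terms cancel against the corresponding part of $n-1$), and conclude by uniqueness. The only step to make explicit is the final ``lifting'': the leading-order identity expresses $q_0(\n)$ only as a weighted average of its values at once-broken configurations, so establishing $q_0(\n)=q_0(\margVec{\n})$ for the true coefficient (the paper's \lemmaref{lemma:q0}) requires an induction on the number of break events needed to fully shatter $\n$, which you invoke but do not spell out.
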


\medskip
\noindent \emph{Remark:} An exact, closed-form expression for the one-locus sampling distribution $\onelocq(\marginal{\n}{l} \mid \theta_l, \bfP{l})$ is not known for general finite-alleles mutation models.
However, if a finite-alleles parent-independent mutation model is assumed at each locus (i.e., for each locus $l\in[\nLoc]$, the mutation transition matrix satisfies $\P{l}_{ab}=\pi^{(l)}_b$ for all $a,b\in[K_l]$), then in \eref{eq:q0(n)} one can use Wright's \cite{wri:1949} one-locus sampling formula
\[
\onelocq(\marginal{\n}{l} \mid \theta_l, \bfP{l}) = \frac{1}{(\theta)_{n^{(l)}}}\prod_{a=1}^{K_l} (\theta \pi^{(l)}_a)_{n^{(l)}_a},
\]
where $(x)_n = x(x+1)\cdots(x+n-1)$.  
\bigskip

A proof of \propref{prop:q0} is provided in \sref{sec:q0_prop}.
Since $q_0(\n)$ depends only on the marginal sample configuration $\margVec{\n} = (\marginal{\n}{1},\ldots,\marginal{\n}{\nLoc})$, henceforth we use $q_0(\n)$ and\break $q_0(\margVec{\n})$ interchangeably.

%%%%%%%%%%%%%%%%%%%%%%%%%%%%%%%%%%%%%%%%%%%%%%%%%%%%%%%%%%%%%%%%%%%%%%%%%%%%%%%%
In \sref{sec:q1_lemma}, we apply the inclusion-exclusion principle to derive the following key result:
%%%%%%%%%%%%%%%%%%%%%%%%%%%%%%%%%%%%%%%%%%%%%%%%%%%%%%%%%%%%%%%%%%%%%%%%%%%%%%%%
% Proposition 2
%%%%%%%%%%%%%%%%%%%%%%%%%%%%%%%%%%%%%%%%%%%%%%%%%%%%%%%%%%%%%%%%%%%%%%%%%%%%%%%%

\begin{proposition}
\label{prop:q1recurrence}
The $q_1(\n)$ term in the asymptotic expansion \eqref{eq:asf} of $q(\n)$ is the unique solution to the recursion
\begin{align}
\lefteqn{\sum_{h\in\cH} n_h \sum_{l\in B(h)} r_l \biggl[ q_1(\n) - q_1(\n - \e_h + \e_{\rec{h}{l}{-}} + \e_{\rec{h}{l}{+}} )\biggr]}\hspace{1cm} & \notag\\
=& \displaystyle \sum_{h\in \cH\cup\{\un^\nLoc\}} \Biggl[ q_0(\margVec{\n} - \margVec{\e_h})\notag\\
{}& \hspace{1cm}
\times \sum_{\hXcond} (-1)^{|X-\sLoc{h}|} 
\Biggl(\sum_{h'\in\hapSet{h}{X}} n_{h'}\Biggr)\Biggl(\sum_{h''\in\hapSet{h}{X}} n_{h''}- 1\Biggr) \Biggr],
\label{eq:q1_simple_recurrence}
\end{align}
with boundary conditions
\begin{equation}
q_1(\e_h) = 0, \text{ for all $h\in\cH$.}
\label{eq:q1_boundary}
\end{equation}
We define $q_0(\margVec{\n}) = 0$ if $\marginal{n_a}{l} < 0$ for any $l \in [\nLoc]$ and $a \in [K_l]$.
\end{proposition}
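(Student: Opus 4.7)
My plan is to derive the recursion \eqref{eq:q1_simple_recurrence} by substituting the asymptotic expansion \eqref{eq:asf} into the closed system \eqref{eq:linearsystem} and collecting coefficients of powers of $\rho^{-1}$. Because $\rho_l = r_l \rho$, only the recombination sums on either side of \eqref{eq:linearsystem} carry explicit factors of $\rho$; the coalescence and mutation terms contribute at order $O(1)$.

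At order $\rho^1$, the coefficient is $\sum_{h \in \cH} n_h \sum_{l \in \bpts{h}} r_l \bigl[q_0(\n) - q_0(\n - \e_h + \e_{\rec{h}{l}{-}} + \e_{\rec{h}{l}{+}})\bigr]$, which vanishes identically: breaking a haplotype at an interior interval preserves every marginal sample configuration, and by \propref{prop:q0}, $q_0$ depends on $\n$ only through $\margVec{\n}$. At order $\rho^0$, the recombination contributions on both sides yield precisely the left-hand side of \eqref{eq:q1_simple_recurrence}, while the remaining coalescence, mutation, and ``diagonal'' LHS terms assemble into a single expression $F(\n)$ built purely from $q_0$. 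It then suffices to prove that $F(\n)$ equals the inclusion-exclusion sum on the right-hand side of \eqref{eq:q1_simple_recurrence}.

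The reduction of $F(\n)$ is the combinatorial heart of the proof. First, \propref{prop:q0} allows us to write $q_0 = \prod_l \onelocq(\marginal{\n}{l})$, so that each $\onelocq$ satisfies its own one-locus coalescent recursion. Applying this recursion locus by locus absorbs the mutation sum in $F(\n)$ against the ``on-locus'' portion of the self- and cross-coalescence sums, leaving a signed combination of $q_0(\margVec{\n} - \margVec{\e_h})$ terms that count pairs of sample haplotypes agreeing on their overlap. The key structural observation is that for compatible $h, h' \in \cH$, the marginals of $\n - \e_h - \e_{h'} + \e_{C(h,h')}$ differ from $\margVec{\n}$ exactly on $\sLoc{h} \cap \sLoc{h'}$, so the surviving sum naturally reorganizes by overlap pattern. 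Reindexing by partial haplotypes $h \in \cH \cup \{\un^\nLoc\}$, with $\un^\nLoc$ encoding pairs that share no specified locus, and by jointly-specified sets $X \supseteq \sLoc{h}$, the alternating factor $(-1)^{|X \setminus \sLoc{h}|}$ effects inclusion-exclusion on the Boolean lattice of subsets of $[\nLoc]$, converting ``pairs specified on at least $X$'' (the haplotypes ranging over $\hapSet{h}{X}$) into ``pairs specified on exactly $X$'' — and this produces exactly the right-hand side of \eqref{eq:q1_simple_recurrence}.

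The boundary conditions $q_1(\e_h) = 0$ follow immediately from \eqref{eq:boundaryconditions}: since $q(\e_h) = \prod_{l \in \sLoc{h}} \pi^{(l)}_{h_l}$ is independent of $\rho$, all positive-order coefficients of its asymptotic expansion vanish. Uniqueness of the $q_1$ solution is inherited, coefficient by coefficient, from the full-rank property of \eqref{eq:linearsystem} noted in the discussion preceding \eqref{eq:asf}. The main obstacle will be the combinatorial bookkeeping in the third paragraph: accurately tracking how each cross-locus contribution of the coalescence sum reorganizes into inclusion-exclusion form, and in particular verifying that the ``empty partial haplotype'' $\un^\nLoc$ correctly captures all pairs of sample haplotypes sharing no specified locus.
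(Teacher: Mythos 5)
Your plan follows the paper's proof essentially step for step: substitute the expansion \eref{eq:asf} into \eref{eq:linearsystem}, note that the order-$\rho$ terms cancel because $q_0$ depends only on $\margVec{\n}$, eliminate the mutation terms against the sum of one-locus recursions satisfied by the product form of $q_0$, and reorganize the surviving coalescence terms by the overlap $\sLoc{h'}\cap\sLoc{h''}$ using inclusion-exclusion over the sets $X$. The paper verifies the same combinatorial identity in the opposite direction (it expands the right-hand side of \eref{eq:q1_simple_recurrence} and matches it to the recursion derived from \eref{eq:linearsystem}), which is a cosmetic difference only. Two points, however, need repair or completion.

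First, uniqueness: the full-rank property of \eref{eq:linearsystem} guarantees that $q(\n)$, and hence each coefficient $q_1(\n)$, is a well-defined quantity, but it says nothing about whether the \emph{derived} system \eref{eq:q1_simple_recurrence}--\eref{eq:q1_boundary} has at most one solution --- that is a different linear system and could a priori be degenerate, so ``inherited coefficient by coefficient'' does not suffice. The paper closes this by an induction on the number of recombination events needed to reduce $\n$ to $\margVec{\n}$, exactly as in the proof of \lemmaref{lemma:q0}: whenever some sampled haplotype spans more than one locus, the coefficient $\sum_{h}n_h\sum_{l\in\bpts{h}}r_l$ of $q_1(\n)$ on the left of \eref{eq:q1_simple_recurrence} is strictly positive and every other $q_1$ term involves a configuration strictly closer to fully broken up. Second, when you execute the ``combinatorial bookkeeping,'' note that the sum over $X$ in \eref{eq:q1_simple_recurrence} carries the restriction $|X|\geq 2$, so the inclusion-exclusion that converts ``specified on at least $X$'' into ``specified on exactly $X$'' leaves uncancelled $|X|=0$ and $|X|=1$ strata; these are precisely the $n(n-1)\,q_0(\n)$, $\sum_l \marginal{n}{l}(\marginal{n}{l}-1)\,q_0(\n)$, and $\sum_{l}\sum_{a}\marginal{n_a}{l}(\marginal{n_a}{l}-1)\,q_0(\cdots,\marginal{\n}{l}-\uvec{l}{a},\cdots)$ terms, and they must be matched against the ``diagonal'' left-hand-side terms and the one-locus correction terms in your $F(\n)$. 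Your outline is correct, but this accounting is the actual content of the proof and cannot be left as an obstacle to be overcome later.
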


\medskip
\noindent
%%%%%%%%%%%%%%%%%%%%%%%%%%%%%%%%%%%%%%%%%%%%%%%%%%%%%%%%%%%%%%%%%%%%%%%%%%%%%%%%
In \sref{sec:q1_thm}, we prove that the closed-form expression  for $q_1(\n)$ in the following theorem is the unique solution to \eref{eq:q1_simple_recurrence} and \eref{eq:q1_boundary}:
%%%%%%%%%%%%%%%%%%%%%%%%%%%%%%%%%%%%%%%%%%%%%%%%%%%%%%%%%%%%%%%%%%%%%%%%%%%%%%%%
% Theorem 
%%%%%%%%%%%%%%%%%%%%%%%%%%%%%%%%%%%%%%%%%%%%%%%%%%%%%%%%%%%%%%%%%%%%%%%%%%%%%%%%

\begin{theorem}\label{thm:q1}
Recursion \eqref{eq:q1_simple_recurrence} and boundary conditions \eqref{eq:q1_boundary} admit the following unique solution for $q_1(\n)$:
\begin{equation}
q_1(\n) = 
\displaystyle \sum_{h\in \cH\cup\{\un^\nLoc\}}  q_0(\margVec{\n} - \margVec{\e_h}) 
 \sum_{\hXcond}\frac{ (-1)^{|X-\sLoc{h}|}  }{\rsum{X}}
{\sum_{h'\in\hapSet{h}{X}} n_{h'}\choose 2},
\label{eq:thm}
\end{equation}
where $q_0$ is given by a product of marginal one-locus sampling distributions  as described in \propref{prop:q0}.
\end{theorem}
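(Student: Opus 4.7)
Since \propref{prop:q1recurrence} already establishes uniqueness of the solution to \eqref{eq:q1_simple_recurrence}--\eqref{eq:q1_boundary}, it suffices to verify that the closed-form expression \eqref{eq:thm} satisfies both the recursion and the boundary condition. The plan is to reduce the verification of \eqref{eq:q1_simple_recurrence} to a purely combinatorial identity by exploiting the invariance of the marginal configuration under the break operation, and then to collapse the resulting alternating sum via a partial-fraction/telescoping identity for $r_l/\rsum{X}$.

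The boundary check is essentially immediate. For $\n=\e_{h_0}$ and any outer indices $(h,X)$ appearing in \eqref{eq:thm}, the inner sum $\sum_{h'\in\hapSet{h}{X}} n_{h'}$ is either $0$ or $1$, so every $\binom{\cdot}{2}$ factor vanishes and the formula returns $0$, as required.

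The main verification rests on the structural observation that the break $\n \mapsto \n - \e_h + \e_{\rec{h}{l}{-}} + \e_{\rec{h}{l}{+}}$ preserves the marginal vector, because $\marginal{\e_h}{k} = \marginal{\e_{\rec{h}{l}{-}}}{k} + \marginal{\e_{\rec{h}{l}{+}}}{k}$ for every $k \in [\nLoc]$. Therefore, when \eqref{eq:thm} is substituted into the difference $q_1(\n) - q_1(\n - \e_h + \e_{\rec{h}{l}{-}} + \e_{\rec{h}{l}{+}})$ on the LHS of \eqref{eq:q1_simple_recurrence}, the outer factor $q_0(\margVec{\n} - \margVec{\e_{\tilde h}})$ (indexed by the dummy haplotype $\tilde h$ of \eqref{eq:thm}) is common on both sides, and the verification reduces to an identity among the inner combinatorial sums. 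Introducing the abbreviation $N(\tilde h, X; \n) := \sum_{h'\in\hapSet{\tilde h}{X}} n_{h'}$, I would track how $N(\tilde h, X;\cdot)$ transforms under breaking a given $h$ at an interval $l$: the change depends only on whether $h \in \hapSet{\tilde h}{X}$ and on where $l$ sits relative to $X$. After expanding $\binom{N}{2}=N(N-1)/2$ and reorganizing the LHS of \eqref{eq:q1_simple_recurrence} as a double sum over $(\tilde h, X)$, the task becomes to show that the weights $\sum_{l \in B(h)} r_l / \rsum{X}$, ranging over the break intervals that affect $\hapSet{\tilde h}{X}$, telescope to $1$, thereby cancelling the $1/\rsum{X}$ denominator of \eqref{eq:thm}; the signed inclusion--exclusion $(-1)^{|X-\sLoc{\tilde h}|}$ then collapses the double sum to the unweighted $N(N-1)$ form on the RHS of \eqref{eq:q1_simple_recurrence}.

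The main obstacle I anticipate lies in this last step: ensuring that the contributions from different anchor haplotypes $\tilde h$ and different subsets $X$ align so that the $\rsum{X}$ denominators disappear uniformly, leaving exactly the terms of \eqref{eq:q1_simple_recurrence}. In particular, the partial-fraction identity that rewrites $r_l/\rsum{X}$ as a difference of terms indexed by sub-intervals of $X$ must interact cleanly with the signed sum over $X \supseteq \sLoc{\tilde h}$; the combinatorial heart of the argument is verifying that this interaction produces cancellations only along the break interval $l$, leaving the desired closed form for the right-hand side.
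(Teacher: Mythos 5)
Your plan follows essentially the same route as the paper's proof: the boundary check via vanishing binomial coefficients, the use of marginal-invariance of the break operation to factor out $q_0(\margVec{\n}-\margVec{\e_h})$, and a case analysis on where the break interval $l$ sits relative to $X$ are all exactly the paper's steps. The ``main obstacle'' you anticipate is in fact immediate: the only break intervals that change $\sum_{h'\in\hapSet{h}{X}} n_{h'}$ are those with $\min(X)\leq l<\max(X)$, each contributing $-1$, and $\sum_{l=\min(X)}^{\max(X)-1} r_l = \rsum{X}$ by definition, so the denominator cancels without any partial-fraction or further inclusion--exclusion manipulation.
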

%%%%%%%%%%%%%%%%%%%%%%%%%%%%%%%%%%%%%%%%%%%%%%%%%%%%%%%%%%%%%%%%%%%%%%%%%%%%%%%%
The intuition behind \propref{prop:q1recurrence} and \thmref{thm:q1} is as follows:  In \cite{jen:son:2010}, a formula for $q_1(\n)$ in the two-locus case was 
obtained by deriving a recursion satisfied by $q_1(\n)$
and by solving it using a probabilistic interpretation based on multivariate hypergeometric distributions.
The correct multi-locus generalization of the two-locus recursion for $q_1(\n)$ used in \cite{jen:son:2010} turns out to be the inclusion-exclusion type expression shown in \propref{prop:q1recurrence}, and an appropriate generalization of the associated probabilistic interpretation is based on Wallenius' noncentral hypergeometric distributions.
For ease of exposition, however, in \sref{sec:q1_thm} we provide a purely combinatorial proof of \thmref{thm:q1}.
For two loci, we show in \sref{sec:example} that the general multi-locus solution for $q_1(\n)$ in \eref{eq:thm} reduces to the solution found in \cite{jen:son:2010}.

In summary, \propref{prop:q0} and \thmref{thm:q1} imply the following  asymptotic expansion of the $L$-locus sampling distribution:

%%%%%%%%%%%%%%%%%%%%%%%%%%%%%%%%%%%%%%%%%%%%%%%%%%%%%%%%%%%%%%%%%%%%%%%%%%%%%%%%
%  Corollary
%%%%%%%%%%%%%%%%%%%%%%%%%%%%%%%%%%%%%%%%%%%%%%%%%%%%%%%%%%%%%%%%%%%%%%%%%%%%%%%%

\begin{corollary}\label{qapproxtheorem}
For an arbitrary $\nLoc$-locus sample configuration $\n$, the sampling probability $q(\n)$ in the limit
		$\rho\to\infty$ has the following asymptotic expansion:
\begin{align*} 
q(\n) = & \prod_{l=1}^{\nLoc} \onelocq(\marginal{\n}{l})\\
{} & + {1 \over \rho} \sum_{h\in \cH\cup\{\un^\nLoc\}}  \left[\prod_{l=1}^{\nLoc} \onelocq(\marginal{\n}{l} - \marginal{\e_h}{l})\right]
\!
 \sum_{\hXcond}\!
\frac{(-1)^{|X-\sLoc{h}|} }{\rsum{X}}
{\sum_{h'\in\hapSet{h}{X}} n_{h'}\choose 2}
\\
{}& +  O\left({1 \over \rho^2}\right),
\end{align*}
where $\onelocq(\marginal{\n}{l})$ denotes the marginal one-locus sampling probability for locus $l$ with parameters $\theta_l$ and $\bfP{l}$.
\end{corollary}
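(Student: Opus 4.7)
The plan is to assemble the corollary directly from the three ingredients already in hand: the formal asymptotic expansion \eqref{eq:asf}, the closed form for $q_0(\n)$ given by Proposition \ref{prop:q0}, and the closed form for $q_1(\n)$ given by Theorem \ref{thm:q1}. Since the expansion \eqref{eq:asf} has already been justified (it follows from the full-rank rational-function argument sketched just before it), there is no additional convergence or uniqueness issue to address, and the proof reduces to substitution.

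Concretely, I would first write
\[
q(\n) \;=\; q_0(\n) \;+\; \frac{q_1(\n)}{\rho} \;+\; O\!\left(\frac{1}{\rho^2}\right),
\]
absorbing the $q_2(\n)/\rho^2$ and higher-order coefficients of \eqref{eq:asf} into the $O(1/\rho^2)$ remainder; the coefficients $q_k(\n)$ for $k \geq 2$ are independent of $\rho$, so this absorption is uniform in the model parameters (with the remainder understood as $\rho \to \infty$). Next I would invoke Proposition \ref{prop:q0} to replace $q_0(\n)$ by $\prod_{l=1}^{\nLoc} \onelocq(\marginal{\n}{l})$, and Theorem \ref{thm:q1} to replace $q_1(\n)$ by the inclusion-exclusion sum in \eqref{eq:thm}.

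The only bookkeeping step is to rewrite $q_0(\margVec{\n} - \margVec{\e_h})$, which appears inside the sum for $q_1(\n)$, via Proposition \ref{prop:q0} applied to the marginal configuration $\margVec{\n} - \margVec{\e_h}$. This yields $\prod_{l=1}^{\nLoc} \onelocq(\marginal{\n}{l} - \marginal{\e_h}{l})$, with the convention that $\marginal{\e_h}{l}$ is the zero vector when $h_l = \un$ (so that locus $l$ contributes $\onelocq(\marginal{\n}{l})$ unchanged), and with $q_0$ interpreted as $0$ when any entry becomes negative, per the convention stated in Proposition \ref{prop:q1recurrence}. The case $h = \un^{\nLoc}$ is handled by the same formula since $\margVec{\e_{\un^\nLoc}}$ is the tuple of zero vectors.

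There is no genuine obstacle here: the corollary is a clean restatement packaging Proposition \ref{prop:q0} and Theorem \ref{thm:q1} into a single asymptotic formula for $q(\n)$. Once the substitutions are performed and the indexing conventions for marginal configurations are spelled out, the displayed expansion in the corollary follows immediately.
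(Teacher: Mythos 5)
Your proposal is correct and matches the paper's treatment: the corollary is stated there as an immediate consequence of Proposition \ref{prop:q0} and Theorem \ref{thm:q1} substituted into the expansion \eqref{eq:asf}, with no separate proof given. Your additional care about the conventions for $\marginal{\e_h}{l}$ when $h_l = \un$ and for the $h = \un^{\nLoc}$ term is consistent with the paper's definitions and fills in the only bookkeeping the paper leaves implicit.
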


%%%%%%%%%%%%%%%%%%%%%%%%%%%%%%%%%%%%%%%%%%%%%%%%%%%%%%%%%%%%%%%%%%%%%%%%%%%%%%%%

Note that the formulas for $q_0(\n)$ and $q_1(\n)$ shown in \propref{prop:q0} and \thmref{thm:q1}, respectively, do not 
have any explicit dependence on the mutation parameters.  More precisely, the dependence on the assumed mutation model arises only implicitly  through the one-locus sampling probabilities $\onelocq(\marginal{\n}{l})$, and the formulas in \propref{prop:q0} and \thmref{thm:q1}  apply to all finite-alleles mutation models. 
%%%%
In fact, by carrying out a similar line of derivation as that presented in this paper, it can be shown that 
the formulas in \propref{prop:q0} and \thmref{thm:q1} also apply to the case of the infinite-alleles model of mutation at each locus;  the marginal one-locus sampling probabilities $\onelocq(\marginal{\n}{l})$ in that case are given by the Ewens sampling formula \cite{ewe:1972}.
%%%%
Jenkins and Song~\cite{jen:son:2009:G}  observed this universality property of $q_0$ and $q_1$ earlier in the case of two loci.  Our results imply that the universality property extends to an arbitrary number $\nLoc$ of loci.

%%%%%%%%%%%%%%%%%%%%%%%%%%%%%%%%%%%%%%%%%%%%%%%%%%%%%%%%%%%%%%%%%%%%%%%%%%%%%%%
\section{An explicit example: the three-locus case}
\label{sec:example}
%%%%%%%%%%%%%%%%%%%%%%%%%%%%%%%%%%%%%%%%%%%%%%%%%%%%%%%%%%%%%%%%%%%%%%%%%%%%%%%

Below we provide an explicit formula for $q_1(\n)$ in the case of $\nLoc=3$.
For ease of notation, we adopt the convention that the indices $i, j,$ and $k$ denote specified alleles which range over $[K_1], [K_2],$ and $[K_3]$, respectively.  Hence, $n_{ijk}$ denotes the number of fully specified haplotype $(i,j,k)$.
As in the rest of this paper, the symbol ``$\un$'' represents an unspecified allele.   Finally, the symbol ``$\any$'' for the index corresponding to locus $l$ represents a summation over all the alleles in $[K_l]$, while the symbol ``$\bullet$'' denotes a summation over $[K_l]\cup\{\un\}$.
  For example,  $n_{i\un \any} = \sum_{k\in[K_3]} n_{i \un k}$ and $n_{i\any\any} = \sum_{j\in[K_2]}\sum_{k\in[K_3]} n_{ijk}$, whereas $n_{i\bullet \any} = n_{i\un\any} + n_{i\any\any}$. In this notation, \thmref{thm:q1} implies that $q_1(\n)$ for $L=3$ is given by
%%%%
\begin{align}
q_1(\n) = & \ 
  q_0(\margVec{\n}) \left[ {1 \over r_1} {n_{\any \any \bullet} \choose 2} + {1 \over r_2} {n_{\bullet \any \any} \choose 2} + {1 \over r_1 + r_2} {n_{\any \bullet \any} \choose 2}  - {1 \over r_1 + r_2} {n_{\any \any \any} \choose 2}\right] \nonumber\\
%%%%
& + \sum_{i} q_0(\margVec{\n} - \margVec{\e_{i\un\un}}) \left[ {1 \over r_1 + r_2} {n_{i\any \any} \choose 2}  - {1 \over r_1 + r_2} {n_{i \bullet \any} \choose 2} - {1 \over r_1} {n_{i\any \bullet} \choose 2} \right]  \nonumber\\
%%%%
& + \sum_{j} q_0(\margVec{\n} - \margVec{\e_{\un j \un}}) \left[ {1 \over r_1 + r_2} {n_{\any j\any}\choose 2} - {1 \over r_1} {n_{\any j \bullet} \choose 2}   - {1 \over r_2}{n_{\bullet j\any} \choose 2}\right]	\nonumber\\
%%%%
& + \sum_{k} q_0(\margVec{\n} - \margVec{\e_{\un \un k}}) \left[ {1 \over r_1 + r_2} {n_{\any \any k}\choose 2} - {1 \over r_1 + r_2} {n_{\any \bullet k} \choose 2} - {1 \over r_2} {n_{\bullet \any k} \choose 2}\right]	\nonumber\\
%%%%
& + \sum_{i,j} q_0(\margVec{\n} - \margVec{\e_{ij\un}}) \left[ {1 \over r_1}{n_{ij\bullet}\choose 2} - {1 \over r_1 + r_2} {n_{ij\any}\choose 2}\right]	    \nonumber\\
%%%%
& + \sum_{j,k} q_0(\margVec{\n} - \margVec{\e_{\un jk}}) \left[ {1 \over r_2} {n_{\bullet jk}\choose 2}- {1 \over r_1 + r_2} {n_{\any jk}\choose 2}\right]\nonumber \\
%%%%
& + \sum_{i,k} q_0(\margVec{\n} - \margVec{\e_{i\un k}}) \left[ {1 \over r_1 + r_2} {n_{i\bullet k}\choose 2} - {1 \over r_1 + r_2} {n_{i\any k}\choose 2} \right]     \nonumber\\
%%%%
& + \sum_{i,j,k} q_0(\margVec{\n} - \margVec{\e_{ijk}}) {1 \over r_1 + r_2} {n_{ijk}\choose 2},
\label{eq:q1,L=3}
\end{align}
where $q_0$ is given by a product of marginal one-locus sampling probabilities.
If the sample does not contain any haplotype with an unspecified allele $\un$, \eref{eq:q1,L=3} reduces to the following:
\begin{align*}
q_1(\n) =
&  \left( {1 \over r_1} + {1 \over r_2}\right) q_0(\margVec{\n}){n_{\any \any \any}\choose 2}  
%%%%
 - {1 \over r_1}  \sum_{i} q_0(\margVec{\n} - \margVec{\e_{i\un\un}}) {n_{i\any \any}\choose 2} \\
%%%%
& - \left({1 \over r_1} + {1 \over r_2} - {1 \over r_1 + r_2}  \right)	
 \sum_{j} q_0(\margVec{\n} - \margVec{\e_{\un j \un}})  {n_{\any j\any}\choose 2} \\
%%%%
& - {1 \over r_2} \sum_{k} q_0(\margVec{\n} - \margVec{\e_{\un \un k}}) { n_{\any \any k}\choose 2}\\
%%%%
& + \left({1 \over r_1} - {1 \over r_1 + r_2} \right) \sum_{i,j} q_0(\margVec{\n} - \margVec{\e_{ij\un}}){n_{ij\any}\choose 2}  \\
%%%%
& + \left({1 \over r_2} - {1 \over r_1 + r_2} \right) \sum_{j,k} q_0(\margVec{\n} - \margVec{\e_{\un jk}}){n_{\any jk}\choose 2}  \\
%%%%
& + {1 \over r_1 + r_2} \sum_{i,j,k} q_0(\margVec{\n} - \margVec{\e_{ijk}})  {n_{ijk}\choose 2}.
\end{align*}
%%%
If the second locus is ignored, or equivalently, if every haplotype in the sample has an unspecified allele $\un$ at the second locus, \eref{eq:q1,L=3} becomes
\begin{align*}
q_1(\n) =\  & {1 \over r_1 + r_2} \Bigg[
  q_0(\margVec{\n})  {n_{\any \un \any} \choose 2} - \sum_{i} q_0(\margVec{\n} - \margVec{\e_{i\un\un}}) {n_{i \un \any} \choose 2} \\
%%%%
&  \phantom{{1 \over r_1 + r_2} \Bigg[}
- \sum_{k} q_0(\margVec{\n} - \margVec{\e_{\un \un k}})  {n_{\any \un k} \choose 2}\\
%%%%
& \phantom{{1 \over r_1 + r_2} \Bigg[}
+ \sum_{i,k} q_0(\margVec{\n} - \margVec{\e_{i\un k}}) {n_{i\un k} \choose 2} \Bigg],
\end{align*}
which coincides with the formula found by Jenkins and Song~\cite{jen:son:2009:G, jen:son:2010} in the case of $L=2$.

%%%%%%%%%%%%%%%%%%%%%%%%%%%%%%%%%%%%%%%%%%%%%%%%%%%%%%%%%%%%%%%%%%%%%%%%%%%%%%%
\section{Proofs of main results}
\label{sec:proofs}
%%%%%%%%%%%%%%%%%%%%%%%%%%%%%%%%%%%%%%%%%%%%%%%%%%%%%%%%%%%%%%%%%%%%%%%%%%%%%%%

%%%%%%%%%%%%%%%%%%%%%%%%%%%%%%%%%%%%%%%%%%%%%%%%%%%%%%%%%%%%%%%%%%%%%%%%%%%%%%%%
% Proof of Proposition 1
%%%%%%%%%%%%%%%%%%%%%%%%%%%%%%%%%%%%%%%%%%%%%%%%%%%%%%%%%%%%%%%%%%%%%%%%%%%%%%%%
In this section, we provide proofs of the results described in \sref{sec:results}.
For a given locus $l\in[\nLoc]$ and an allele $a\in[K_l]$, we use $\uvec{l}{a}$ to denote the $K_l$-dimensional unit vector where the $j$th component is 1 if $j=a$ and $0$ otherwise.

%%%%%%%%%%%%%%%%%%%%%%%%%%%%%%%%%%%%%%%%%%%%%%%%%%%%%%%%%%%
\subsection{Proof of \propref{prop:q0}} \label{sec:q0_prop}
%%%%%%%%%%%%%%%%%%%%%%%%%%%%%%%%%%%%%%%%%%%%%%%%%%%%%%%%%%%

By substituting the asymptotic expansion \eqref{eq:asf} into recursion \eqref{eq:linearsystem}, dividing by $\rho$, and letting $\rho \to \infty$, we obtain the following recursion for $q_0(\n)$:
\begin{align} \label{eq:q0recurrence}
\biggl[\sum_{h\in\cH} n_h \sum_{l\in \bpts{h}} r_l \biggr] q_0(\n)  = \sum_{h\in\cH} n_h \sum_{l\in \bpts{h}} r_{l} q_0(\n - \e_h + \e_{\rec{h}{l}{-}} + \e_{\rec{h}{l}{+}}).
\end{align}
We first establish the following lemma:

\begin{lemma}\label{lemma:q0} For every $\nLoc$-locus sample configuration $\n$,  $q_0(\bfn)$
	 depends only on the marginal sample configurations $\margVec{\n}=(\marginal{\bfn}{1},\ldots,\marginal{\bfn}{\nLoc})$:
\begin{equation}
 q_0(\n) = q_0(\margVec{\n}),
\label{eq:q0_dependence}
\end{equation}
where $\margVec{\n}$ is viewed as a sample configuration containing $\sum_{h\in\cH} n_h |\sLoc{h}|$ haplotypes each with a specified allele at exactly one locus and unspecified alleles elsewhere.
\end{lemma}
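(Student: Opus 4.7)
The plan is to prove this by induction using recursion \eqref{eq:q0recurrence}. The key observation is that every break operation appearing on the right-hand side of \eqref{eq:q0recurrence} preserves the marginal configuration $\margVec{\n}$: for any $h\in\cH$ and any $l\in\bpts{h}$, each specified locus--allele pair of $h$ is inherited by exactly one of the fragments $\rec{h}{l}{-}$ or $\rec{h}{l}{+}$, so $\margVec{\e_{\rec{h}{l}{-}}} + \margVec{\e_{\rec{h}{l}{+}}} = \margVec{\e_h}$, and hence every configuration $\n - \e_h + \e_{\rec{h}{l}{-}} + \e_{\rec{h}{l}{+}}$ appearing on the right-hand side has the same marginal vector as $\n$.

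To drive the induction I would introduce the potential $\Phi(\n) = \sum_{h\in\cH} n_h \binom{|\sLoc{h}|}{2}$. If $h$ has $k=|\sLoc{h}|\geq 2$ specified loci and we break it at $l\in\bpts{h}$ into fragments with $k_1, k_2\geq 1$ specified loci (necessarily $k_1+k_2=k$, since $\min(\sLoc{h})\leq l<\max(\sLoc{h})$), the identity $\binom{k}{2}=\binom{k_1}{2}+\binom{k_2}{2}+k_1k_2$ shows that $\Phi$ strictly decreases by $k_1k_2\geq 1$ under the break. Moreover, the coefficient $\sum_{h}n_h\sum_{l\in\bpts{h}}r_l$ on the left-hand side of \eqref{eq:q0recurrence} vanishes exactly when $\bpts{h}=\emptyset$ for every supported $h$, i.e., exactly when $\Phi(\n)=0$; in that case each such $h$ has $|\sLoc{h}|=1$, so $\n$ already coincides with $\margVec{\n}$ viewed as a configuration of one-locus haplotypes, which handles the base case trivially.

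For the inductive step, dividing \eqref{eq:q0recurrence} by the strictly positive coefficient $\sum_{h}n_h\sum_{l\in\bpts{h}}r_l$ exhibits $q_0(\n)$ as a convex combination of values $q_0(\n - \e_h + \e_{\rec{h}{l}{-}} + \e_{\rec{h}{l}{+}})$, each having the same marginal vector as $\n$ but strictly smaller $\Phi$. The inductive hypothesis gives $q_0(\n-\e_h+\e_{\rec{h}{l}{-}}+\e_{\rec{h}{l}{+}}) = q_0(\margVec{\n})$ for each term, so the convex combination collapses to $q_0(\margVec{\n})$, as required.

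The only real conceptual step is choosing a potential that strictly decreases under each break; everything else reduces to routine verification that the recursion coefficients are nonnegative and sum to the left-hand coefficient, and that the base case is identified correctly. Conceptually, the lemma merely formalizes the physical intuition that in the $\rho\to\infty$ limit every multi-locus haplotype instantly shatters into its specified one-locus pieces, after which only the marginal allele counts can possibly matter.
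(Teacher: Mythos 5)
Your proof is correct and follows essentially the same route as the paper: an induction on a measure of how far $\n$ is from being fully broken into one-locus fragments, using recursion \eqref{eq:q0recurrence} to express $q_0(\n)$ as a convex combination of values at configurations with the same marginals but strictly smaller measure. The paper inducts on the number of recombination events needed (i.e., $\sum_h n_h(|\sLoc{h}|-1)$, which drops by exactly $1$ per break) rather than your potential $\sum_h n_h\binom{|\sLoc{h}|}{2}$, but this difference is immaterial.
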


\begin{proof}
We use induction on the number of recombination events needed to transform a given sample configuration $\n$
into the configuration $\margVec{\n}$ that contains $\sum_{h\in\cH} n_h |\sLoc{h}|$ haplotypes each specified at exactly one locus.
The base case corresponds to a sample $\n$ consisting of haplotypes each specified at only one locus, in which case $\n = \margVec{\n}$, and \eref{eq:q0_dependence} is trivially true.
Given a sample configuration $\n$, consider the right hand side of \eref{eq:q0recurrence}. For any haplotype $h\in\cH$ satisfying $n_h>0$ and any $l\in B(h)$, let $\m = \n - \e_h + \e_{\rec{h}{l}{-}} + \e_{\rec{h}{l}{+}}$. The sample configuration $\m$ needs one less recombination event to be transformed to $\margVec{\m}$ than $\n$ needs to be transformed to $\margVec{\n}$. 
Hence, applying the induction hypothesis to $\m$, we have
$q_0(\m) = q_0(\margVec{\m})$. Noting that $\margVec{\m} = \margVec{\n}$ and using \eref{eq:q0recurrence}, we have
\[
\biggl[\sum_{h\in\cH} n_h \sum_{l\in \bpts{h}} r_l \biggr] q_0(\n)  = \sum_{h\in\cH} n_h \sum_{l\in \bpts{h}} r_{l} q_0(\margVec{\n}),
\]
which simplifies to \eref{eq:q0_dependence}.
\hfill{}\qed
\end{proof}

Now, let $\m$ denote a sample configuration such that $m_h=0$ for all $h$ with more than one specified locus (i.e., $|\sLoc{h}|> 1$).
Substituting the asymptotic expansion \eqref{eq:asf} for $\n=\m$ into \eqref{eq:linearsystem}, using \eref{eq:q0recurrence} to simplify, letting $\rho \to \infty$, and utilizing \lemmaref{lemma:q0}, we obtain the recursion
\begin{align}
\lefteqn{\sum_{l=1}^\nLoc \biggl[ \marginal{m}{l} (\marginal{m}{l} - 1) +  \theta_l \marginal{m}{l} \biggr] q_0(\marginal{\m}{1},\ldots,\marginal{\m}{\nLoc})=} \phantom{\hspace{0.5cm}} {}& \nonumber\\
{}& \sum_{l=1}^\nLoc \sum_{a \in [K_l]} \marginal{m_a}{l} (\marginal{m_a}{l} - 1) 
q_0(\marginal{\m}{1},\ldots,\marginal{\m}{l-1}, \marginal{\m}{l} - \uvec{l}{a},\marginal{\m}{l+1},\ldots,\marginal{\m}{\nLoc})  \nonumber\\
{}&+ \sum_{l=1}^{\nLoc} \theta_l \sum_{a, b \in [K_l]} P^{(l)}_{ab} \marginal{m_b}{l}
q_0(\marginal{\m}{1},\ldots,\marginal{\m}{l-1}, \marginal{\m}{l} - \uvec{l}{b} + \uvec{l}{a},\marginal{\m}{l+1},\ldots,\marginal{\m}{\nLoc}),
\label{eq:q0intermed}
\end{align}
and boundary conditions 
\begin{align}
q_0(\bfmath{0}^{(1)}, \ldots, \bfmath{0}^{(l-1)}, \uvec{l}{a_l}, \bfmath{0}^{(l+1)}, \ldots, \bfmath{0}^{(\nLoc)}) = \pi^{(l)}_{a_l}, \text{ for all $l \in [\nLoc]$ and $a_l \in [K_l]$},
\label{eq:q0_boundary}
\end{align}
where $\bfmath{0}^{(j)}$ denotes the $K_j$-dimensional zero vector.
Notice that recursion \eqref{eq:q0intermed} is the sum of $\nLoc$ one-locus recursions of the form
\begin{align*}
\biggl[ \marginal{m}{l} (\marginal{m}{l} - 1) + \theta_l \marginal{m}{l} \biggr] \onelocq(\marginal{\m}{l})  = & \sum_{a \in [K_l]} \marginal{m_a}{l} (\marginal{m_a}{l} - 1) \onelocq(\marginal{\m}{l} - \uvec{l}{a})  \\
%%%%
{} & +
\theta_l \sum_{a, b \in [K_l]} P^{(l)}_{ab} \marginal{n_b}{l} \onelocq(\marginal{\m}{l} - \uvec{l}{b} + \uvec{l}{a}), \notag
\end{align*}
while boundary conditions \eref{eq:q0_boundary} are a product of one-locus boundary conditions $\onelocq(\uvec{l}{a}) = \pi^{(l)}_a$ and $\onelocq(\bfmath{0}^{(l)}) = 1$ for all $l\in[\nLoc]$ and $a\in[K_l]$.
Hence, it follows that $q_0(\marginal{\m}{1},\ldots,\marginal{\m}{\nLoc}) = \prod_{l=1}^{\nLoc} \onelocq(\marginal{\m}{l})$.  Finally, together with \lemmaref{lemma:q0}, letting $\marginal{\m}{l} = \marginal{\n}{l}$ for all $1\leq l \leq \nLoc$ in the above result implies $q_0(\n) = q_0(\marginal{\n}{1},\ldots,\marginal{\n}{\nLoc}) = \prod_{l=1}^\nLoc \onelocq(\marginal{\n}{l})$.
 \hfill{}\qed

%%%%%%%%%%%%%%%%%%%%%%%%%%%%%%%%%%%%%%%%%%%%%%%%%%%%%%%%%%%%%%%%%%%%%%%%%%%%%%%%
% Proof of Proposition 2
%%%%%%%%%%%%%%%%%%%%%%%%%%%%%%%%%%%%%%%%%%%%%%%%%%%%%%%%%%%%%%%%%%%%%%%%%%%%%%%%
\subsection{Proof of  \propref{prop:q1recurrence}}\label{sec:q1_lemma}
By an induction argument similar to that in the proof of \lemmaref{lemma:q0}, one can see that recursion \eref{eq:q1_simple_recurrence} and boundary conditions \eref{eq:q1_boundary} have a unique solution. 
Substituting \eqref{eq:asf} into both sides of \eqref{eq:linearsystem}, using \eref{eq:q0recurrence}, and taking the limit as $\rho \to \infty$, we obtain\\
\begin{align}
\lefteqn{\sum_{h\in\cH} n_h \biggl[ (n - 1) + \sum_{l\in \sLoc{h}} \theta_l\biggr] q_0(\n) +
\sum_{h\in\cH} n_h \sum_{l\in \bpts{h}} r_l  q_1(\n)} \hspace{1cm}  \notag \\
=&\sum_{h\in\cH} n_h\biggl[ (n_h - 1) q_0(\n - \e_h) + \sum_{h'\in\cH\st h\compatible h', h\neq h'}  n_{h'} q_0(\n - \e_{h} - \e_{h'} + \e_{C(h,h')}) \notag\\
%%%%
{}& \phantom{\sum_{h\in\cH} n_h\biggl[} +
\sum_{l\in\sLoc{h}} \theta_l \sum_{a \in [K_l]} P^{(l)}_{a,h_l} q_0(\n - \e_h + \e_{\sub{h}{l}{a}}) \notag\\
%%%%
{}& \phantom{\sum_{h\in\cH} n_h\biggl[} +
 \sum_{l\in \bpts{h}} r_{l} q_1(\n - \e_h + \e_{\rec{h}{l}{-}} + \e_{\rec{h}{l}{+}})
\biggr]. \label{eq:tmp_exp_q0} 
\end{align}
The terms that depend on mutation parameters can be eliminated by setting $\m = \margVec{\n}$ in \eqref{eq:q0intermed}, subtracting it from \eqref{eq:tmp_exp_q0}, and using the property of $q_0(\n)$ that it depends only on the marginal sample configuration at each locus.  As a consequence, the following simpler recursion can be obtained:
\begin{align}
\lefteqn{\sum_{h\in\cH} n_h \sum_{l\in B(h)} r_l \biggl[ q_1(\n) - q_1(\n - \e_h + \e_{\rec{h}{l}{-}} + \e_{\rec{h}{l}{+}} )\biggr]} \hspace{1cm}  
& \nonumber \\
%%%
=& 
 \sum_{h,h'\in\cH\st h\compatible h', h\neq h'} n_h n_{h'} q_0(\n - \e_{h} - \e_{h'} + \e_{C(h,h')})
\nonumber\\
{}& +  \sum_{h\in\cH} n_h (n_h - 1) q_0(\n - \e_h)
- \Bigl[ n (n - 1) - \sum_{l=1}^{\nLoc} \marginal{n}{l} (\marginal{n}{l} - 1) \Bigr]q_0(\n)\nonumber \\
{}&-\sum_{l=1}^\nLoc \sum_{a \in [K_l]} \marginal{n_a}{l} (\marginal{n_a}{l} - 1) 
q_0(\marginal{\n}{1},\ldots,\marginal{\n}{l-1}, \marginal{\n}{l} - \uvec{l}{a},\marginal{\n}{l+1},\ldots,\marginal{\n}{\nLoc}).
\label{eq:q1_init_recurrence} 
\end{align}
We also have the boundary conditions $q_1(\e_h) = 0$ for all $h\in\cH$ since $q(\e_h) = q_0(\e_h)$.

%%%%%%%%%%%%%%%%%
As the left hand side and boundary conditions of \eref{eq:q1_init_recurrence} are identical to that of \eref{eq:q1_simple_recurrence}, it suffices to establish that their right hand sides are also identical to show equivalence. 
Note that the right hand side of \eqref{eq:q1_simple_recurrence} can be written as follows:
\begin{align}
&{\ds \sum_{h\in \cH\cup\{\un^\nLoc\}}  q_0(\margVec{\n} - \margVec{\e_h}) \sum_{\hXcond} (-1)^{|X-\sLoc{h}|} \Biggl(\sum_{h'\in\hapSet{h}{X}}\! n_{h'}\Biggr)\Biggl(\sum_{h''\in\hapSet{h}{X}} \!\! n_{h''}- 1\Biggr) } 	\notag  \\
& =  \ds \sum_{h\in \cH\cup\{\un^\nLoc\}}  q_0(\margVec{\n} - \margVec{\e_h}) \Biggl[ \sum_{\hXcond} (-1)^{|X-\sLoc{h}|} \Biggl(\sum_{h',h''\in\hapSet{h}{X}} n_{h'} n_{h''} \Biggr) \Biggr]    \notag    \\
&\hspace{5mm} - \ds \sum_{h\in \cH\cup\{\un^\nLoc\}}  q_0(\margVec{\n} - \margVec{\e_h}) \Biggl[ \sum_{\hXcond} (-1)^{|X-\sLoc{h}|} \Biggl(\sum_{h'\in\hapSet{h}{X}} n_{h'} \Biggr) \Biggr].  \label{eq:q1recur_breakup}
\end{align}
The first term on the right hand side of \eref{eq:q1recur_breakup} can be rewritten as
\begin{align}
\lefteqn{\ds \sum_{h\in \cH\cup\{\un^\nLoc\}}  q_0(\margVec{\n} - \margVec{\e_h}) \Biggl[ \sum_{\hXcond} (-1)^{|X-\sLoc{h}|} \Biggl(\sum_{h',h''\in\hapSet{h}{X}} n_{h'} n_{h''} \Biggr) \Biggr]} \hspace{0.2cm} &    \nonumber\\
%%%
& = \sum_{h\in \cH\cup\{\un^\nLoc\}}  q_0(\margVec{\n} - \margVec{\e_h}) \Biggl[\sum_{h',h'' \succeq h} n_{h'} n_{h''} \Biggl( \sum_{\substack{X: |X| \geq 2, \\ \sLoc{h} \subseteq X \subseteq (\sLoc{h'} \cap \sLoc{h''})}} \!\! (-1)^{|X-\sLoc{h}|} \Biggr) \Biggr]	\nonumber\\
%%%%
& = \sum_{h\in \cH\cup\{\un^\nLoc\}}  q_0(\margVec{\n} - \margVec{\e_h}) \Bigg\{ \sum_{h',h'' \succeq h} n_{h'} n_{h''} 
\Biggl[  \sum_{X: \sLoc{h} \subseteq X \subseteq (\sLoc{h'} \cap \sLoc{h''})} \!\! (-1)^{|X-\sLoc{h}|}  \nonumber\\
%%%%
& \hspace{1cm} - \sum_{\substack{X: |X| = 0, \\ \sLoc{h} \subseteq X \subseteq (\sLoc{h'} \cap \sLoc{h''})}}\!\! (-1)^{|X-\sLoc{h}|} \hspace{2mm}
- \sum_{\substack{X: |X| = 1, \\ \sLoc{h} \subseteq X \subseteq (\sLoc{h'} \cap \sLoc{h''})}}\!\! (-1)^{|X-\sLoc{h}|}   \Biggr] \Bigg\},
\label{eq:tmp_first_sum}
\end{align}
where the first equality follows because, by definition \eref{eq:H(h,X)}, the condition that $h' \in \hapSet{h}{X}$ is equivalent to $h' \succeq h$ and $X\subset\sLoc{h'}$, and similarly for $h''$.
Now, by the inclusion-exclusion principle,
\[
\sum_{X: \sLoc{h} \subseteq X \subseteq (\sLoc{h'} \cap \sLoc{h''})} (-1)^{|X-\sLoc{h}|} = \delta_{\sLoc{h'} \cap \sLoc{h''}, \sLoc{h}},
\]
where for any sets $A$ and $B$, $\delta_{A,B} = 1$ if $A = B$, and $\delta_{A,B} = 0$ otherwise.
Then the second to last line of \eref{eq:tmp_first_sum} simplifies to
\begin{align*}
\lefteqn{\sum_{h\in \cH\cup\{\un^\nLoc\}}  q_0(\margVec{\n} - \margVec{\e_h}) \Biggl[ \sum_{h',h'' \succeq h} n_{h'} n_{h''} \Biggl( \sum_{X: \sLoc{h} \subseteq X \subseteq (\sLoc{h'} \cap \sLoc{h''})} (-1)^{|X-\sLoc{h}|} \Biggr) \Biggr]} \hspace{1cm} \\
& = \sum_{h\in \cH\cup\{\un^\nLoc\}}  q_0(\margVec{\n} - \margVec{\e_h}) \Biggl[ \sum_{h',h'' \succeq h} n_{h'} n_{h''} \delta_{\sLoc{h'} \cap \sLoc{h''}, \sLoc{h}} \Biggr]   \\
& = \sum_{h', h'': h' \compatible h''} n_{h'} n_{h''}  q_0(\margVec{\n} - \margVec{\e_{h'}} - \margVec{\e_{h''}} + \margVec{\e_{C(h',h'')}})    \\
& = \sum_{h', h'': h' \compatible h''} n_{h'} n_{h''}  q_0(\n - \e_{h'} - \e_{h''} + \e_{C(h',h'')})    \\
& = \sum_{h', h'': h' \compatible h'', h' \neq h''} n_{h'} n_{h''}  q_0(\n - \e_{h'} - \e_{h''} + \e_{C(h',h'')}) + \sum_{h \in \cH} n_h^2 q_0(\n - \e_{h}),
\end{align*}
where the second equality follows because $S(h) = S(h') \cap S(h'')$ and $h', h'' \succeq h$ imply that $h'$ and $h''$ are compatible by definition, and hence $\margVec{\e_h} = \margVec{\e_{h'}} + \margVec{\e_{h''}} - \margVec{\e_{C(h',h'')}}$. The third equality holds because $q_0$ depends only on the marginal sample configurations, and the last equality follows because when $h' = h'' = h$, $C(h', h'') = h$.
%%%%
The terms in the last line of  \eref{eq:tmp_first_sum} can be simplified as follows:
\[
\sum_{h\in \cH\cup\{\un^\nLoc\}}  q_0(\margVec{\n} - \margVec{\e_h}) \Biggl[ \sum_{h',h'' \succeq h} n_{h'} n_{h''}\!\! \sum_{\substack{X: |X| = 0, \\ \sLoc{h} \subseteq X \subseteq (\sLoc{h'} \cap \sLoc{h''})}}\!\! (-1)^{|X-\sLoc{h}|} 
\Biggr] = n^2 q_0(\n),
\]
since the only $h, h', h''$ and $X$ that satisfy the conditions of the inner summation over $X$ are $h=\{\un^\nLoc\}$, $h', h'' \in \cH$ and $X = \varnothing$.
Furthermore, one can also see that
\begin{align*}
\lefteqn{\sum_{h\in \cH\cup\{\un^\nLoc\}}  q_0(\margVec{\n} - \margVec{\e_h}) \Biggl[ \sum_{h',h'' \succeq h} n_{h'} n_{h''} \Biggl( \sum_{\substack{X: |X| = 1, \\ \sLoc{h} \subseteq X \subseteq (\sLoc{h'} \cap \sLoc{h''})}} (-1)^{|X-\sLoc{h}|} \Biggr) \Biggr]} \hspace{0.5cm} &\\
& = 
\sum_{l=1}^{\nLoc} \sum_{a \in [K_l]} (\marginal{n_a}{l})^2 q_0(\marginal{\n}{1},\ldots,\marginal{\n}{l-1}, \marginal{\n}{l} - \uvec{l}{a},\marginal{\n}{l+1},\ldots,\marginal{\n}{\nLoc}) - 
 \sum_{l=1}^{\nLoc} (\marginal{n}{l})^2  q_0(\n),
\end{align*}
since the only $h, h', h''$ and $X$ that satisfy the conditions of the summation over $X$ are:
\begin{itemize}
\item $X = \{ l \}$ for some $l \in [\nLoc]$ and $h=\{\un^\nLoc\}$. Because of the condition that $X \subseteq (\sLoc{h'} \cap \sLoc{h''})$, $h'$ and $h''$ range over all haplotypes that are specified at locus $l$.
\item $X = \{ l \}$ for some $l \in [\nLoc]$ and $h$ such that $h_l = a$ for some $a \in [K_l]$ and $h_{l'} = \un$ for all $l' \neq l$. Because $h', h'' \succeq h$ and $X \subseteq (\sLoc{h'} \cap \sLoc{h''})$, $h'$ and $h''$ range over all haplotypes with allele $a$ at locus $l$.
\end{itemize}
%%%%%%%%

In summary, \eref{eq:tmp_first_sum}, which corresponds to the first term on the right hand side of  \eref{eq:q1recur_breakup}, can be written as
\begin{align}
\lefteqn{\sum_{h\in \cH\cup\{\un^\nLoc\}}  q_0(\margVec{\n} - \margVec{\e_h}) \Biggl[ \sum_{\hXcond} (-1)^{|X-\sLoc{h}|} \Biggl(\sum_{h',h''\in\hapSet{h}{X}} n_{h'} n_{h''} \Biggr) \Biggr]} \hspace{1cm}    \notag \\
=& \sum_{h', h'': h' \compatible h'', h' \neq h''} n_{h'} n_{h''}  q_0(\n - \e_{h'} + \e_{h''} - \e_{C(h',h'')}) + \sum_{h \in \cH} n_h^2 q_0(\n - \e_{h})  \notag \\
{}&  - [n^2 - \sum_{l=1}^{\nLoc} (\marginal{n}{l})^2] q_0(\n)\notag\\
{}& - \sum_{l=1}^{\nLoc} \sum_{a \in [K_l]} (\marginal{n_a}{l})^2 q_0(\marginal{\n}{1},\ldots,\marginal{\n}{l-1}, \marginal{\n}{l} - \uvec{l}{a},\marginal{\n}{l+1},\ldots,\marginal{\n}{\nLoc}). \label{eq:first_term}
\end{align}
%%%
By following similar steps as above, the second term on the right hand side of \eref{eq:q1recur_breakup} can be written as
\begin{align}
\lefteqn{\sum_{h\in \cH\cup\{\un^\nLoc\}}  q_0(\margVec{\n} - \margVec{\e_h}) \Biggl[ \sum_{\hXcond} (-1)^{|X-\sLoc{h}|} \Biggl(\sum_{h'\in\hapSet{h}{X}} n_{h'} \Biggr) \Biggr]} \hspace{0.5cm} &    \notag \\    
= &  \sum_{h\in \cH\cup\{\un^\nLoc\}}  q_0(\margVec{\n} - \margVec{\e_h}) \Biggl[\sum_{h' \succeq h} n_{h'} 
\Biggl( \sum_{X: \sLoc{h} \subseteq X \subseteq \sLoc{h'}} (-1)^{|X-\sLoc{h}|} \notag  \\
{} &\hspace{2cm} - \sum_{\substack{X: |X| = 0, \\ \sLoc{h} \subseteq X \subseteq \sLoc{h'}}} (-1)^{|X-\sLoc{h}|} 
 - \sum_{\substack{X: |X| = 1, \\ \sLoc{h} \subseteq X \subseteq (\sLoc{h'} \cap \sLoc{h''})}} (-1)^{|X-\sLoc{h}|} \Biggr) \Biggr]  \notag \\
= &  \sum_{h\in \cH\cup\{\un^\nLoc\}}  q_0(\margVec{\n} - \margVec{\e_h}) \Biggl[\sum_{h' \succeq h} n_{h'} 
\Biggl( \delta_{S(h'),S(h)}  \notag \\
{} &\hspace{2cm}- \sum_{\substack{X: |X| = 0, \\ \sLoc{h} \subseteq X \subseteq \sLoc{h'}}} (-1)^{|X-\sLoc{h}|} 
 - \sum_{\substack{X: |X| = 1, \\ \sLoc{h} \subseteq X \subseteq (\sLoc{h'} \cap \sLoc{h''})}} (-1)^{|X-\sLoc{h}|} \Biggr) \Biggr]  \notag \\
=&  \sum_{h \in \cH} n_h q_0(\n - \e_h) - n q_0(\n) + \sum_{l=1}^{\nLoc} \marginal{n}{l} q_0(\n) \nonumber \\
{} & - \sum_{l=1}^{\nLoc} \sum_{a \in [K_l]} \marginal{n_a}{l} q_0(\marginal{\n}{1},\ldots,\marginal{\n}{l-1}, \marginal{\n}{l} - \uvec{l}{a},\marginal{\n}{l+1},\ldots,\marginal{\n}{\nLoc}),
\label{eq:second_term}
\end{align}
where the second equality follows from the inclusion-exclusion principle.
Finally, subtracting \eref{eq:second_term} from \eref{eq:first_term}, we see that the right hand side of \eref{eq:q1_simple_recurrence} is equal to the right hand side of \eref{eq:q1_init_recurrence}. 
\hfill\qed

%%%%%%%%%%%%%%%%%%%%%%%%%%%%%%%%%%%%%%%%%%%%%%%%%%%%%%%%%%%%%%%%%%%%%%%%%%%%%%%%

\subsection{Proof of \thmref{thm:q1}} \label{sec:q1_thm}
%%%%%%%%%%%%%%%%%%%%%%%%%%%%%%%%%%%%%%%%%%%%%%%%%%%%%%%%%%%%%%%%%%%%%%%%%%%%%%%%
% Proof of Theorem 2
%%%%%%%%%%%%%%%%%%%%%%%%%%%%%%%%%%%%%%%%%%%%%%%%%%%%%%%%%%%%%%%%%%%%%%%%%%%%%%%%

We first show that the boundary conditions \eref{eq:q1_boundary} are satisfied by \eref{eq:thm}: 
If $\n = \e_g$ for some $g \in \cH$, then in the right hand side of \eref{eq:thm}, the only $h$ that can potentially contribute to the summation 
are those satisfying $g \succeq h$, since otherwise $q_0(\margVec{\e_g}-\margVec{\e_h})=0$.  However, for $g \succeq h$, if $\sLoc{h} \subseteq X \subseteq \sLoc{g}$, then $\sum_{h'' \in \hapSet{h}{X}} n_{h''} - 1 = 0$ since $g \in \hapSet{h}{X}$, and if $X \not\subset \sLoc{g}$, then $\sum_{h' \in \hapSet{h}{X}} n_{h'} = 0$ since $g \notin \hapSet{h}{X}$. Therefore, in the right hand side of  \eref{eq:thm}, 
\[
{\sum_{h'\in\hapSet{h}{X}} n_{h'}\choose 2} = \frac{1}{2} \Bigg(\sum_{h' \in \hapSet{h}{X}} n_{h'}\Bigg)\Bigg(\sum_{h'' \in \hapSet{h}{X}} n_{h''} - 1\Bigg) = 0,
\]
and so $q_1(\e_g) = 0$ for all $g \in \cH$.

We now show that the recursion \eref{eq:q1_simple_recurrence} is satisfied by \eref{eq:thm}.
Substituting \eref{eq:thm} into the left hand side of \eref{eq:q1_simple_recurrence}, we obtain
\begin{align*}
\lefteqn{\sum_{g\in\cH} n_g \sum_{l\in B(g)} r_l \biggl[ q_1(\n) - q_1(\n - \e_g + \e_{\rec{g}{l}{-}} + \e_{\rec{g}{l}{+}} )\biggr]} \hspace{1cm}& \\
= & \sum_{g\in\cH} n_g \sum_{l\in B(g)} r_l 
\sum_{h\in \cH\cup\{\un^\nLoc\}}  q_0(\margVec{\n} - \margVec{\e_h}) 
\Bigg\{  \sum_{\hXcond} {(-1)^{|X-\sLoc{h}|} \over \rsum{X}} \\
  & \hspace{0.5cm}  \times \left[ - \left(\sum_{h'\in\hapSet{h}{X}} n_{h'}\right) \left(\sum_{h''\in\hapSet{h}{X}} \delta_{h'',\rec{g}{l}{-}} + \delta_{h'',\rec{g}{l}{+}} - \delta_{h'',g} \right) \right. \\
  & \left. \left. \phantom{\hspace{0.5cm}  \times \left[ \right.} 
- {\sum_{h'\in\hapSet{h}{X}} \delta_{h',\rec{g}{l}{-}} + \delta_{h',\rec{g}{l}{+}} - \delta_{h',g} \choose 2} \right]
\right\},
\end{align*}
where for any haplotypes $g$ and $h$, $\delta_{g,h} = 1$ if $g = h$ and $\delta_{g,h} = 0$ otherwise. Also, in the previous line, by ${x \choose 2}$ we mean $x(x-1)/2$ for all $x\in\bbR$.
Note that if $g \notin \hapSet{h}{X}$, then $\rec{g}{l}{-}, \rec{g}{l}{+} \notin \hapSet{h}{X}$, and so 
\[
\sum_{h' \in \hapSet{h}{X}} \delta_{h',\rec{g}{l}{-}} + \delta_{h',\rec{g}{l}{+}} - \delta_{h',g} = 0.
\]
Interchanging the summation over $g, l$ and $X$, and introducing the restriction that $g \in \hapSet{h}{X}$ (i.e., $S(g) \supseteq X$ and $g \succeq h$), we obtain
\begin{align}
\lefteqn{\sum_{g\in\cH} n_g \sum_{l\in B(g)} r_l \biggl[ q_1(\n) - q_1(\n - \e_g + \e_{\rec{g}{l}{-}} + \e_{\rec{g}{l}{+}} )\biggr]}\hspace{1cm} & \notag \\    
%%%
    = &
    \sum_{h\in \cH\cup\{\un^\nLoc\}}  q_0(\margVec{\n} - \margVec{\e_h}) \left\{ \rule{0mm}{0.7cm}\right. \sum_{\hXcond} {(-1)^{|X-\sLoc{h}|} \over \rsum{X}} \sum_{g \in \hapSet{h}{X}} n_g \sum_{l\in B(g)} r_l \notag \\
%%%
      & \hspace{0.2cm}\times  \left[ - \left(\sum_{h'\in\hapSet{h}{X}} n_{h'}\right) \left(\sum_{h''\in\hapSet{h}{X}} \delta_{h'',\rec{g}{l}{-}} + \delta_{h'',\rec{g}{l}{+}} - \delta_{h'',g} \right) \right. \nonumber\\
%%%
      & \phantom{\hspace{0.2cm}  \times \left[\right.} \left. \left. \rule{0mm}{0.7cm}
 - {\sum_{h'\in\hapSet{h}{X}} \delta_{h',\rec{g}{l}{-}} + \delta_{h',\rec{g}{l}{+}} - \delta_{h',g} \choose 2}
\right] \right\}. 
\label{eq:q1_tmp1} 
\end{align}
Now, for $g \in \hapSet{h}{X}$, note that 
\begin{equation}
	\sum_{h' \in \hapSet{h}{X}} \delta_{h', g} = 1.
\label{eq:sum1}	
\end{equation}
We utilize this identity in the ensuing discussion.
There are three cases for the recombination break interval $l \in B(g)$ in the right hand side of \eref{eq:q1_tmp1}:

%%%%%%%%%%%%%%%%%%%%% Figure %%%%%%%%%%%%%%%%%%%%%
\begin{figure}
\centering
\subfigure[]{\label{fig:case1}\includegraphics[width=0.42\textwidth]{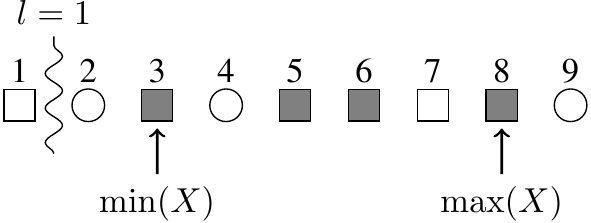}}
 \hspace{1.5cm}
\subfigure[]{\label{fig:case3}\includegraphics[width=0.42\textwidth]{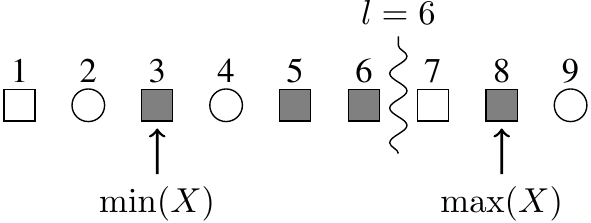}}
\caption{
Illustration of sub-cases considered in the proof of \thmref{thm:q1}.  Here $X\subseteq S(g)$, where $g\succeq h$.
Squares denote the loci in $\sLoc{g}$, while shaded squares denote the loci in $X$ (and hence also in $\sLoc{g}$). Circles denote the loci not in $\sLoc{g}$ (and hence not in $X$).
A squiggle denotes the recombination break interval $l$ considered in each case. The squares to the left and to the right of the squiggle respectively denote the loci in $\sLoc{\rec{g}{l}{-}}$ and $\sLoc{\rec{g}{l}{+}}$.
\subref{fig:case1} Case with $l < \min(X)$.
\subref{fig:case3} Case with $\min(X) \leq l < \max(X)$.
}
\label{fig:cases}
\end{figure}
%%%%%%%%%%%%%%%%%%%%%%%%%%%%%%%%%%%%%%%%%%%%%%%%%%%

\begin{enumerate}
\item $\bfmath{l < \min(X)}$: This case is illustrated in \fref{fig:case1}.\\  Note that $\sum_{\hpcond} \delta_{h', \rec{g}{l}{-}} = 0$ since $\sLoc{\rec{g}{l}{-}} \cap X = \varnothing$ and hence $\rec{g}{l}{-} \notin \hapSet{h}{X}$. 
Also, $\sum_{\hpcond} \delta_{h', \rec{g}{l}{+}} = 1$ since $g \succeq h$ and $\sLoc{\rec{g}{l}{+}} \supseteq X \supseteq \sLoc{h}$, and so $\rec{g}{l}{+} \succeq h$ and $\rec{g}{l}{+} \in \hapSet{h}{X}$. 
Hence, together with \eref{eq:sum1}, we conclude $\sum_{\hpcond} \delta_{h', \rec{g}{l}{-}} + \delta_{h', \rec{g}{l}{+}} - \delta_{h',g} = 0$. 
%%%%
\item $\bfmath{\max(X) \leq l}$:  By a similar argument, we see that $\sum_{\hpcond} \delta_{h', \rec{g}{l}{-}} + \delta_{h', \rec{g}{l}{+}} - \delta_{h',g} = 0$.
%%%%
\item $\bfmath{\min(X) \leq l < \max(X)}$:  This case is illustrated in \fref{fig:case3}.\\
Note that $\sum_{\hpcond} \delta_{h', \rec{g}{l}{-}} = 0$ since $\max(X) \notin \sLoc{\rec{g}{l}{-}}$ and hence $\rec{g}{l}{-} \notin \hapSet{h}{X}$. 
Similarly, $\sum_{\hpcond} \delta_{h', \rec{g}{l}{+}} = 0$ since $\min(X) \notin \sLoc{\rec{g}{l}{+}}$ and hence $\rec{g}{l}{+} \notin \hapSet{h}{X}$. 
Therefore, upon using \eref{eq:sum1}, we conclude $\sum_{\hpcond} \delta_{h', \rec{g}{l}{-}} + \delta_{h', \rec{g}{l}{+}} - \delta_{h',g} = -1$. 
\end{enumerate}

Partitioning the summation over $l\in\bpts{g}$ in the right hand side of expression \eref{eq:q1_tmp1} into the above three cases and noting that only the third case gives a non-zero value for the term $\sum_{\hpcond} \delta_{h', \rec{g}{l}{-}} + \delta_{h', \rec{g}{l}{+}} - \delta_{h',g}$, we obtain
%%%%
\begin{align*}
\lefteqn{\sum_{g\in\cH} n_g \sum_{l\in B(g)} r_l \biggl[ q_1(\n) - q_1(\n - \e_g + \e_{\rec{g}{l}{-}} + \e_{\rec{g}{l}{+}} )\biggr]} & \notag \\    
    & =
    \sum_{h\in \cH\cup\{\un^\nLoc\}}  q_0(\margVec{\n} - \margVec{\e_h})  \sum_{\hXcond} \left\{  {(-1)^{|X-\sLoc{h}|} \over \rsum{X}} 
       \phantom{\sum_{l=\min(X)}^{\max(X)-1} }
 \right. \notag\\
    & \hspace{5cm}
\times \left.\sum_{g \in \hapSet{h}{X}} n_g \sum_{l=\min(X)}^{\max(X)-1} r_l  \left[ \left(\sum_{\hpcond} n_{h'} \right) - 1 \right] \right\}   \\
    & =
    \sum_{h\in \cH\cup\{\un^\nLoc\}}\left[\rule{0mm}{7mm}  q_0(\margVec{\n} - \margVec{\e_h}) \right. \\
 & \hspace{2cm}\times \sum_{\hXcond} (-1)^{|X-\sLoc{h}|} \Bigg( \sum_{h' \in \hapSet{h}{X}} n_h' \Bigg) \Bigg(\sum_{\hppcond} n_{h''} - 1 \Bigg) \left.\rule{0mm}{7mm}  \right].    
\end{align*}

\noindent
This is the expression on the right hand side of \eqref{eq:q1_simple_recurrence}, and thus we have shown that \eref{eq:thm} satisfies \eqref{eq:q1_simple_recurrence}. 
Hence, the proposed solution for $q_1(\n)$ in \thmref{thm:q1} is the unique solution to the recursion \eqref{eq:q1_simple_recurrence} and boundary conditions \eref{eq:q1_boundary}.
\hfill{}\qed

%%%%%%%%%%%%%%%%%%%%%%%%%%%%%%%%%%%%%%%%%%%%%%%%%%%%%%%%%%%%%%%%%%%%%%%%%%%%%%%%

%%%%%%%%%%%%%%%%%%%%%%%%%%%%%%%%%%%%%%%%%%%%%%%%%%%%%%%%%%%%%%%%%%%%%%%%%%%%%%%
\section*{Acknowledgments}
%%%%%%%%%%%%%%%%%%%%%%%%%%%%%%%%%%%%%%%%%%%%%%%%%%%%%%%%%%%%%%%%%%%%%%%%%%%%%%%
We thank Paul Jenkins, Jack Kamm, and Matthias Steinr\"ucken for useful discussion and comments, and an anonymous reviewer for helpful comments.  This research is supported in part by an NIH grant R01-GM094402, an Alfred P. Sloan Research Fellowship, and a Packard Fellowship for Science and Engineering.

%\newpage
\bibliographystyle{apt}
\bibliography{master}

\end{document}